\numberwithin{equation}{section}
\newtheorem{thm}{Theorem}[section]
\newtheorem{dfn}[thm]{Definition}
\newtheorem{exa}[thm]{Example}
\newtheorem{exas}[thm]{Examples}
\newtheorem{prop}[thm]{Proposition}
\newtheorem{cor}[thm]{Corollary}
\newtheorem{lem}[thm]{Lemma}
\newtheorem{rem}[thm]{Remark}
\def\End{\mathop{\mathrm{End}}\nolimits}
\newcommand{\mf}[1]{\mathfrak{#1}}
\newcommand{\bb}[1]{\mathbb{#1}}
\newcommand{\mrm}[1]{\mathrm{#1}}
\newcommand{\C}{\mathbb{C}}
\newcommand{\R}{\mathbb{R}}
\newcommand{\Z}{\mathbb{Z}}
\newcommand{\dif}{\mathrm{d}}
\newcommand{\pd}{\partial}
\newcommand{\Sc}{\mathrm{Sc}}
\newcommand{\Karea}{\mathrm{K\mathchar`-area}}
\newcommand{\ep}{\varepsilon}
\newcommand{\ua}{_{\alpha}}
\newcommand{\ub}{_{\beta}}
\newcommand{\uba}{_{\beta \alpha}}
\newcommand{\id}{\mathrm{id}}
\newcommand{\oshp}{^{\sharp}}
\newcommand{\norm}[1]{\Vert #1 \Vert}
\newcommand{\map}[3]{#1 \colon #2 \rightarrow #3}
\title{Invariance of Finiteness of K-area under Surgery}
\author{Yoshiyasu Fukumoto}
\date{}
\begin{document}
\maketitle

\begin{abstract}
K-area is an invariant for Riemannian manifolds introduced by Gromov
as an obstruction to the existence of positive scalar curvature.
However in general it is difficult to determine whether K-area is finite
or not in spite of its natural definition.
In this paper, we study how the invariant changes under surgery.

\end{abstract}
\section*{Introduction}
The notion of K-area was introduced by Gromov \cite{Gr96}.
It is an invariant for Riemannian manifolds with values in $(0,+\infty]$.
Roughly, $\Karea (M) $ measures how small $C^{0}$ curvature norms can be
achieved for "non-trivial" vector bundles over a Riemannian manifold $M$.
 Here a "non-trivial" vector bundle $E$ means a vector
bundle with non-zero Chern numbers. Finiteness of K-area has a deep
relationship with the existence of positive scalar curvature.
The following theorem was proved by Gromov using 
the relative index theorem \cite{Gr-La83}.
\begin{thm}\cite{Gr96} \label{thm:Sc and K-area}
Let $M$ be an even dimensional complete spin Riemannian manifold.
If the scalar curvature $\Sc$ of $M$ satisfies $\inf \Sc > \ep^{2}$,
then $\Karea (M) \leq c\ep^{-2}$ where $c$ is a constant depending on
the dimension of $M$.

In particular even dimensional spin manifold with $\Karea (M) = \infty$
does not admit complete Riemannian metrics of uniformly positive scalar
curvature.
\end{thm}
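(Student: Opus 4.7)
The plan is to combine the Lichnerowicz formula for a twisted Dirac operator with the Gromov--Lawson relative index theorem. Fix a Hermitian vector bundle $E \to M$ with unitary connection $\nabla^{E}$, assumed to be trivialized outside a compact subset of $M$, and form the twisted spinor Dirac operator $D_{E}$ on $S \otimes E$, where $S$ is the spinor bundle of $M$. The Lichnerowicz--Bochner formula reads
\begin{equation*}
D_{E}^{2} \;=\; \nabla^{*}\nabla \;+\; \frac{\Sc}{4} \;+\; \mca{R}^{E},
\end{equation*}
with the curvature endomorphism $\mca{R}^{E}$ bounded pointwise by $c_{n} \norm{R^{E}}_{C^{0}}$, where $c_{n}$ depends only on $n = \dim M$. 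Under the hypothesis $\inf \Sc > \ep^{2}$, if $\norm{R^{E}}_{C^{0}} < \ep^{2}/(4 c_{n})$ then $D_{E}^{2} \geq \ep^{2}/8 > 0$ uniformly on $M$; completeness ensures essential self-adjointness of $D_{E}$, so the operator is $L^{2}$-invertible, and the same holds trivially for the Dirac operator $D_{0}$ twisted by the trivial bundle of the same rank.

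Next I would invoke the relative index theorem of Gromov--Lawson: since $E$ coincides with the trivial bundle outside a compact set, the relative index of the pair $(D_{E}, D_{0})$ equals a topological pairing involving $\hat{A}(TM)$ and $\mrm{ch}(E) - \rank E$, compactly supported where $E$ is not yet trivialized. The $L^{2}$-invertibility of both operators forces this relative index to vanish. Interpreted as triviality of the Chern-number data that enters the definition of K-area, this gives $\Karea (M) \leq 4 c_{n} \ep^{-2}$, so $c := 4 c_{n}$ works for the first assertion. The second statement follows immediately: if $M$ carried a complete metric of uniformly positive scalar curvature we could pick some $\ep > 0$ with $\inf \Sc > \ep^{2}$, producing a finite bound on $\Karea (M)$ and contradicting $\Karea (M) = \infty$.

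The main obstacle is the non-compact analysis. One must upgrade the Lichnerowicz positivity estimate from pointwise positivity of $D_{E}^{2}$ to actual invertibility on $L^{2}$, which requires completeness for essential self-adjointness together with a cutoff/density argument to rule out approximate kernel at infinity. A secondary subtlety is that the relative index theorem, stated as above, directly detects only pairings of the form $\langle \hat{A}(TM) \cdot \mrm{ch}(E), [M] \rangle$, whereas $\Karea$ is defined via \emph{all} non-vanishing Chern-number obstructions; this is handled by twisting $E$ with suitable auxiliary bundles so that every potential K-area obstruction appears as the relative index of some twisted Dirac operator, or, equivalently, by adopting the K-theoretic formulation of K-area in which the reduction is automatic.
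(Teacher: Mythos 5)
The paper offers no proof of this theorem --- it is quoted from Gromov with only the remark that it was proved via the relative index theorem of Gromov--Lawson --- and your Lichnerowicz-plus-relative-index argument is exactly that standard route, with the two genuine difficulties ($L^{2}$-invertibility on a complete noncompact manifold, and passing from $\hat{A}\cdot\mathrm{ch}$ pairings to arbitrary Chern numbers) correctly identified, provided the ``auxiliary bundles'' are understood to be bundles built tensorially from $E$ itself (exterior and tensor powers, duals) so that their curvature remains controlled by a dimensional multiple of $\norm{R^{E}}$. The one slip is arithmetic: $\norm{R^{E}}<\ep^{2}/(4c_{n})$ yields only $D_{E}^{2}\geq \nabla^{*}\nabla\geq 0$, so you should shrink the curvature threshold (e.g.\ to $\ep^{2}/(8c_{n})$) to obtain the strictly positive lower bound $\ep^{2}/8$ that the invertibility argument requires, which merely changes the constant $c$.
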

Even though both notions of scalar curvature and K-area require
Riemannian metrics, finiteness of K-area on a compact manifold depends
only on its homotopy type. Hence infiniteness of K-area is a homotopical
obstruction to the existence of positive scalar curvature on compact
spin manifolds.

In this paper we verify the following.
\begin{thm}\label{thm:surgery}
Let $M$ be an oriented even dimensional Riemannian manifold with
$p+q= \dim (M)$. Let $M\oshp$ be a manifold obtained by $p$-surgery
for $q \neq 2$.
If $\Karea (M) < \infty$, then $\Karea (M\oshp) < \infty$.
\end{thm}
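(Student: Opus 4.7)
The plan is to prove the contrapositive. Assuming $\Karea(M\oshp)=\infty$, there is a sequence of Hermitian bundles with connection $(E_k\oshp,\nabla_k\oshp)$ on $M\oshp$ whose curvatures satisfy $\Vert R_{\nabla_k\oshp}\Vert_{C^0}\to 0$ while some characteristic number $\int_{M\oshp}P(R_{\nabla_k\oshp})$, with $P$ a top-degree Chern polynomial, stays bounded away from zero. I would transport this sequence to bundles $(E_k,\nabla_k)$ on $M$ with the analogous properties, contradicting $\Karea(M)<\infty$. The geometric vehicle is the trace cobordism $W$ with $\partial W=(-M)\sqcup M\oshp$, obtained by attaching a single handle to $M\times[0,1]$; fix once and for all a Riemannian metric on $W$ extending the given metrics on $M$ and $M\oshp$ and modelled on a standard product metric over the handle.

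The transport then proceeds in three steps. First, extend $E_k\oshp$ topologically to a bundle $\tilde E_k$ on $W$. After deformation retraction of the dual handle attached to $M\oshp$, the obstruction localises to the stable triviality of the restriction of $E_k\oshp$ to a surgery sphere, whose dimension is determined by $q$. By Chern--Weil each integer Chern number of this restriction is bounded by a polynomial in $\Vert R_{\nabla_k\oshp}\Vert_{C^0}$ and so vanishes for $k$ large; after stabilising by a trivial summand the topological extension exists. The hypothesis $q\neq 2$ enters here: it excludes the case in which the attaching sphere would be $S^2$, the dimension where $H^2(S^2;\Z)\cong\Z$ carries the first Chern class of line bundles and a direct topological extension fails to be available uniformly along the sequence. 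Second, equip $\tilde E_k$ with a connection $\tilde\nabla_k$ extending $\nabla_k\oshp$ and satisfying $\Vert R_{\tilde\nabla_k}\Vert_{C^0}\leq C\Vert R_{\nabla_k\oshp}\Vert_{C^0}$, with $C$ depending only on $W$. Third, restrict: $(E_k,\nabla_k):=(\tilde E_k,\tilde\nabla_k)|_M$ has vanishing curvature in the limit, and since $P(R_{\tilde\nabla_k})$ is a closed form on $W$, Stokes' theorem yields $\int_M P(R_{\nabla_k})=\int_{M\oshp}P(R_{\nabla_k\oshp})$ on the nose, transferring the nonvanishing characteristic number from $M\oshp$ to $M$.

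The main obstacle I anticipate is the second step: producing the extended connection with the uniform curvature estimate. A naive interpolation via a partition of unity brings in terms involving derivatives of cutoff functions and the gauge discrepancy between $\nabla_k\oshp$-parallel frames and the local frames trivialising $\tilde E_k$ over the handle, neither of which is a priori bounded by $\Vert R_{\nabla_k\oshp}\Vert_{C^0}$. The resolution should be to trivialise $\tilde E_k$ in a gauge adapted to $\nabla_k\oshp$ near the attaching region---where the connection one-form is small to high order---and to perform the extension in that gauge, so that the only contribution to $\Vert R_{\tilde\nabla_k}\Vert_{C^0}$ comes from the small connection matrix and its derivatives. Making this step quantitative, with constants independent of $k$, is where the bulk of the technical work is likely to lie.
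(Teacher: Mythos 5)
There is a genuine gap, and it sits exactly at the step you flag as the ``main obstacle.'' Your second step --- extending the connection over the cobordism $W$ with $\norm{R_{\tilde\nabla_k}}\leq C\norm{R_{\nabla_k\oshp}}$ --- is the entire content of the theorem, and your proposed resolution (``trivialise $\tilde E_k$ in a gauge adapted to $\nabla_k\oshp$ near the attaching region, where the connection one-form is small'') presupposes precisely what has to be proved. An almost flat connection on a bundle over a non--simply connected region need not admit any gauge in which the connection $1$-form is small: the holonomy around a loop can be a fixed nontrivial unitary while the curvature tends to zero (this is exactly the mechanism that makes $\Karea(T^{2m})=\infty$). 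The existence of such a small gauge is guaranteed only when the relevant region --- a neighbourhood of the belt sphere, i.e.\ of $S^{p}\times S^{q-1}$ --- is simply connected, which is where $q\neq 2$ (and, separately, $p\neq 1$) actually enters; making this quantitative and independent of $k$ is the paper's Lemma \ref{lem:simply connected} (proved by an inductive \v{C}ech-style gluing of parallel-transport frames over a good cover, with the $p=1$ case handled by a doubling trick), and it is the one indispensable ingredient your outline does not supply.

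This is confirmed by the fact that you have misattributed the role of the hypothesis $q\neq 2$. The dual handle is attached to $M\oshp$ along a neighbourhood of the belt sphere $S^{q-1}$, so the topological extension obstruction you describe lives on $S^{q-1}$, not on $S^{2}$; for $q=2$ that sphere is $S^{1}$, over which every complex vector bundle is trivial, so your first step is \emph{least} obstructed exactly when the theorem fails. And it does fail for $q=2$: $0$-surgery on $S^{2}$ (with $p=0$, $q=2$) produces $T^{2}$, and $\Karea(S^{2})<\infty$ while $\Karea(T^{2})=\infty$. Since your argument as written would apply verbatim to this case, the error must lie in the curvature-controlled extension, as explained above. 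Your first and third steps are sound (the Chern--Weil bound forcing stable triviality of the restriction to the belt sphere, and the Stokes transfer of characteristic numbers across $W$), and the cobordism framing is a reasonable repackaging of the paper's cut-and-paste decomposition $M\oshp=M'\cup X$, $M\cong M'\cup Y$, $X\cup(-Y)\cong S^{n}$; but without a quantitative small-gauge lemma on the simply connected separating region the proof does not close.
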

As a special case:
\begin{cor} \label{cor:connected sum}
Let $M_{1}$ and $M_{2}$ be oriented even dimensional
Riemannian manifolds of the same dimension.
Let $M_{1} \sharp M_{2}$ denote the connected sum of 
$M_{1}$ and $M_{2}$.
If both $\Karea (M_{1})$ and $\Karea (M_{2})$ are finite,
then $\Karea (M_{1} \sharp M_{2})$ is also finite.
\end{cor}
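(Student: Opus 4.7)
The plan is to realize $M_{1} \sharp M_{2}$ as the outcome of a single surgery on the disjoint union $M_{1} \sqcup M_{2}$ and then to invoke Theorem \ref{thm:surgery}. This reduces the corollary to two ingredients: controlling $\Karea$ of a disjoint union in terms of the K-areas of its components, and recognizing the connected sum operation as a surgery satisfying the codimension condition.

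First I would show that $\Karea(M_{1} \sqcup M_{2}) \leq \max(\Karea(M_{1}), \Karea(M_{2}))$, so that finiteness is inherited by disjoint unions. A vector bundle $E$ on $M_{1} \sqcup M_{2}$ is a pair $(E_{1}, E_{2})$ of bundles $E_{i}$ on $M_{i}$, and its Chern numbers split additively over the components. Hence if some Chern number of $E$ is non-zero, then at least one restriction $E_{i}$ is non-trivial on $M_{i}$; moreover $\norm{R^{E}} = \max_{i} \norm{R^{E_{i}}} \geq \norm{R^{E_{i}}}$, so $\norm{R^{E}}^{-1} \leq \Karea(M_{i}) \leq \max(\Karea(M_{1}), \Karea(M_{2}))$. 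Taking the supremum over non-trivial $E$ gives the desired inequality.

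Next I would describe the connected sum as a $0$-surgery on $M_{1} \sqcup M_{2}$: remove an embedded $S^{0} \times D^{n}$ (two small open $n$-disks, one in each $M_{i}$) and glue back the tube $D^{1} \times S^{n-1}$ along the common boundary $S^{0} \times S^{n-1}$. In the notation of Theorem \ref{thm:surgery} with $p + q = \dim M$, this surgery has $p = 0$ and $q = n$, where $n$ is the common dimension of $M_{1}$ and $M_{2}$. Since $n$ is even and the only nontrivial case is $n \geq 4$, the condition $q \neq 2$ is automatic. Applying Theorem \ref{thm:surgery} to the disjoint union then yields $\Karea(M_{1} \sharp M_{2}) < \infty$.

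The main obstacle is the bookkeeping of the first step rather than any new geometric analysis: one must fix a precise definition of ``non-trivial'' for bundles on a disconnected base and verify that the additivity of Chern numbers is compatible with the component-wise maximum of curvature norms. Once this preliminary is settled, the remainder of the argument is a formal consequence of Theorem \ref{thm:surgery}.
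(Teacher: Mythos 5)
Your proposal is correct and follows essentially the same route as the paper: the paper likewise notes that $\Karea(M_{1}\sqcup M_{2})=\max\{\Karea(M_{1}),\Karea(M_{2})\}$ and then applies the $p=0$ case of Theorem \ref{thm:surgery} to the disjoint union. Your explicit verification of the disjoint-union inequality and your remark on why the codimension condition $q\neq 2$ holds are details the paper leaves implicit, but the argument is the same.
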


On the other hand the converse is easy to verify. Of course, the
following lamma \ref{lem:conncetd sum trivial} follows
also from theorem \ref{thm:surgery}, but it can be verified without it.
\begin{lem}\label{lem:conncetd sum trivial}
Let $M_{1}$, $M_{2}$, and $M_{1} \sharp M_{2}$ be as above.
If either $M_{1}$ or $M_{2}$ has infinite $\Karea$,
then $\Karea(M_{1} \sharp M_{2}) =\infty$.
\end{lem}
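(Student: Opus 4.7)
The plan is to transport bundles with small curvature from the summand with infinite K-area up to the connected sum via a collapsing map. Assume without loss of generality that $\Karea (M_1) = \infty$. I would first construct a smooth degree-one map $\map{f}{M_1 \sharp M_2}{M_1}$ by the standard pinch construction: choose a coordinate ball $B \subset M_1$ that contains the disk $D_1$ removed in forming the connected sum, let $f$ be the identity on $M_1 \setminus B$, and on the remaining region of $M_1 \sharp M_2$ (the annulus $B \setminus D_1$ together with the connecting neck and $M_2 \setminus D_2$) use a smooth radial cut-off to collapse everything to a single interior point $p \in B$. This yields a smooth map whose differential is uniformly bounded in $C^0$ norm by some constant $C$ depending only on the chosen cut-off.

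Given $\ep > 0$, infiniteness of $\Karea (M_1)$ produces a Hermitian bundle $E \to M_1$ with unitary connection $\nabla$ such that $\norm{R^{\nabla}}_{C^0} < \ep$ and some Chern number of $E$ is non-zero. Pull back along $f$ to obtain $(f^{*} E, f^{*} \nabla)$ on $M_1 \sharp M_2$. Its curvature is $f^{*} R^{\nabla}$, so
\[
\norm{R^{f^{*} \nabla}}_{C^0} \leq C^{2} \, \norm{R^{\nabla}}_{C^0} < C^{2} \ep .
\]
Since $\deg f = 1$, naturality of Chern classes together with $f_{*} [M_1 \sharp M_2] = [M_1]$ gives the same Chern numbers for $f^{*} E$ as for $E$, so $f^{*} E$ is non-trivial in the sense of the introduction. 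Letting $\ep \to 0$ yields $\Karea (M_1 \sharp M_2) = \infty$.

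The only delicate point is the construction of $f$ with uniformly bounded differential; it is a routine bump-function argument carried out inside the chosen coordinate ball $B$, and since $f$ is the identity outside $B$ the bound on $\norm{df}$ holds globally even when the summands are non-compact. No appeal to Theorem \ref{thm:surgery} is needed, which is the point the lemma is meant to illustrate.
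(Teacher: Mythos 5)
Your proposal is correct and follows essentially the same route as the paper: a degree-one pinch map $\map{f}{M_{1}\sharp M_{2}}{M_{1}}$ collapsing the $M_{2}$ side to a point, followed by pulling back bundles of small curvature and invoking preservation of Chern numbers under a degree-one map. The only point worth making explicit is that $f^{\ast}E$ still lies in $K^{\times}(M_{1}\sharp M_{2})$, i.e.\ it is trivial and flat outside a compact set (clear on the collapsed region since the pullback by a constant map is canonically trivial and flat), which the paper also notes when explaining why $f$ need not satisfy the hypotheses of Lemma \ref{lem:hyper spherical} verbatim.
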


In fact, if $M_{1}$ has infinite K-area then there exists a
"non-trivial" vector bundle $E$ over $M_{1}$ with small $C^{0}$
curvature norm. Then we can construct another vector bundle
over $M_{1} \sharp M_{2}$ by extending $E$ trivially onto
$M_{2}$.

We remark that the main theorem is analogous to the following.
\begin{prop} \cite{Gr-La80}
Let $M$ be a compact manifold which carries a Riemannian metric
of positive scalar curvature. Then any manifold obtained by surgeries
in codimension $\geq 3$
also carries a metric of positive scalar curvature.
\end{prop}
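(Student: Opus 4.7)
The plan is to follow the classical Gromov-Lawson construction. Let $M'$ denote the manifold obtained from $M^n$ by surgery along an embedded sphere $S^p \subset M$ of codimension $q = n-p \geq 3$, so that a tubular neighborhood of $S^p$ is diffeomorphic to $S^p \times D^q$ and the complementary handle $D^{p+1} \times S^{q-1}$ is glued in its place. The strategy is to modify the given metric of positive scalar curvature on $M$ only inside a small tube around $S^p$, into a form that admits a smooth positive-scalar-curvature extension across the surgery trace.

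First I would fix a small tubular neighborhood $U \cong S^p \times D^q(\bar r)$ of $S^p$ and, using geodesic normal coordinates $(\sigma, \rho, \theta) \in S^p \times [0, \bar r) \times S^{q-1}$, approximate the given metric on $U$ by the model warped product $g_{S^p} + d\rho^2 + \rho^2 g_{S^{q-1}}$. On a still smaller tube I would replace this model by a one-parameter family of warped products of the form $g_{S^p} + dt^2 + f(t)^2 g_{S^{q-1}}$, where the profile $f(t)$ is the $\rho$-component of a curve $\gamma(t) = (\rho(t), z(t))$ in the half-plane $\{\rho \geq 0\}$. The curve starts tangent to the line $\{\rho = \bar r\}$ so that the deformed metric matches the original near $\partial U$, then bends inward and terminates in an isometric cylinder $S^{q-1}(\delta) \times [0,\infty)$ of small constant radius $\delta$. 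This cylindrical end allows the complementary piece $D^{p+1} \times S^{q-1}$ to be glued in with a matching warped-product metric that caps off the $S^{q-1}$ factor smoothly.

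The heart of the proof is the scalar curvature estimate along the deformation. By the warped product formula, the scalar curvature of the deformed metric decomposes as $\Sc(g_{S^p}) + (q-1)(q-2)/f^2$ plus terms involving $f''/f$ and $(f'/f)^2$ whose magnitude is governed by the curvature of $\gamma$. The fiber contribution $(q-1)(q-2)/f^2$ is \emph{strictly positive} precisely when $q \geq 3$, and this is exactly what allows the bending terms from $\gamma$ to be absorbed, provided the turn is carried out slowly enough and $f$ is kept sufficiently small relative to the bending. The main analytical obstacle is to construct $\gamma$ so that the scalar curvature remains positive throughout the deformation; the codimension condition $q \geq 3$ is essential because for $q = 2$ the fiber $S^1$ contributes no scalar curvature and no profile curve can compensate for the bending terms. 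Once the deformation is complete, the cylindrical portion carries positive scalar curvature coming entirely from the $S^{q-1}$ factor, and the glued metric produces a global metric of positive scalar curvature on $M'$.
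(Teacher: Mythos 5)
The paper itself gives no proof of this proposition: it is quoted from Gromov--Lawson precisely in order to \emph{contrast} that geometric construction with the paper's own bundle-theoretic argument for K-area (the paper's only comment is that the key point is that $S^{q-1}\times N$ carries positive scalar curvature for $q\geq 3$). Your sketch is the standard Gromov--Lawson bending argument, and you correctly locate where codimension $\geq 3$ enters, namely the fiber term $(q-1)(q-2)/f^{2}$. The overall strategy is therefore the right one, but as written it has two genuine gaps.

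First, the order of operations in your opening step would fail. Replacing the given metric on the tube by the model warped product $g_{S^{p}}+\dif\rho^{2}+\rho^{2}g_{S^{q-1}}$ is not a positivity-preserving move: that model is the Riemannian product of $(S^{p},g_{S^{p}})$ with a flat disk, so its scalar curvature is just $\Sc(g_{S^{p}})$, which vanishes for $p\leq 1$ (e.g.\ for $0$-surgery, i.e.\ connected sum) and can even be negative since $g_{S^{p}}$ is the induced metric on the embedded sphere, not the round one. Concretely, in the matching region where your profile satisfies $f(t)=t$, the warped-product scalar curvature $\Sc(g_{S^{p}})+\frac{(q-1)(q-2)}{f^{2}}\left(1-(f')^{2}\right)-2(q-1)\frac{f''}{f}$ collapses to $\Sc(g_{S^{p}})$, so the final metric on $M'$ need not be PSC there. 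Gromov--Lawson avoid this by performing the bending on the \emph{original} metric, realizing the deformed metric as the induced metric on a hypersurface of $M\times\R$ lying over the curve $\gamma$; one passes to the product form only after the fiber radius $\delta$ is so small that $(q-1)(q-2)/\delta^{2}$ dominates every other curvature term. Second, the sentence ``construct $\gamma$ so that the scalar curvature remains positive throughout'' is not a deferrable verification: it is the entire analytic content of the theorem, and the required estimate (controlling the curvature of $\gamma$ against the available positive terms while the curve turns through a right angle) is delicate enough that the argument in the original paper had to be repaired in later expositions. A complete proof must actually exhibit $\gamma$. (A minor slip: the initial tangency is backwards --- to match the original metric near $\pd U$ the curve must start tangent to the $\rho$-axis at $\rho=\bar r$, i.e.\ $f(t)=t$ there, and it is the \emph{terminal} segment that is tangent to the line $\{\rho=\delta\}$.)
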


The proof of our main theorem is rather different.
The idea of the above proposition is that $S^{q-1} \times N$ admits
a Riemannian metric of positive scalar curvature for $q \geq 3$.
On the other hand, 
we use a property that the cartesian product of spheres at the connecting
region is simply connected. Any almost flat vector bundles over compact
simply connected manifolds are trivial, which will be used to compute
finiteness of K-area.

In \cite{Li10} M. Listing studies so called "homology classes of finite
K-area"
and remarks that the homology of finite K-area
in the dimensions lower than the largest one behave in the same way as
the ordinary homology when taking the connected sums.

In \cite{Ha11}
B. Hanke extends the concept of K-area by admitting Hilbert-$A$-module
bundles of small or vanishing curvature.
He defines the notion of infiniteness (and finiteness) of
K-area of $K$-homology classes $h \in K_{0}(M) \otimes \bb{Q}$
for closed smooth manifolds $M$.
It is shown that the K-area of the homological
fundamental classes of area-enlargeable 
manifolds in the sense of \cite{Gr-La83} are infinite.
Moreover he shows that oriented manifolds with fundamental
classes of infinite K-area are essential.
Manifolds are said to be essential if the classifying maps of
universal covers map the homological fundamental classes
to non-zero classes in the homology of the fundamental groups.

\section{Definition and a fundamental lemma}
Let $E\rightarrow M$ be a Hermitian vector bundle over a Riemannian 
manifold $M$, and let $A$ be a section of
$\bigwedge^{\ast}TM \otimes \End(E)$. Let us define
\begin{equation}
\norm{A}:= \sup_{\stackrel{\xi \in \bigwedge^{\ast}(TM)}{\norm{\xi}=1}}
\left|A(\xi) \right|_{op}
\end{equation}
where $\left|A(\xi) \right|_{op}$ denote the operator norm of
$A(\xi) \in \End(E)$.

Let $K^{\times}(M)$ denote the isomorphism classes of 
Hermitian vector bundles equipped with compatible connections
$E=(E,\nabla)$ over $M$, 
which satisfy the following conditions.
\\(i) $(E,\nabla)$ are isomorphic to the trivial bundles $\C^{r}$ 
equipped with flat connections outside compact subsets of $M$.
\\(ii) $(E,\nabla)$ have a non-zero Chern number.
i.e. there exists a (multivariable) polynomial $p$ such that
\begin{equation}
\int_{M}p(c_{1}(E),c_{2}(E), \cdots ) \neq 0
\end{equation}
where $c_{k}(E) \in H^{\ast}_{c}(M)$ are
the Chern classes of $E=(E,\nabla)$.

\begin{dfn}[\cite{Gr96}]
Let $M$ be an even dimensional Riemannian manifold and let
$ R= R^{E} = R^{E, \nabla} $ denote the curvature tensor of $(E,\nabla)$.
Then K-area of $M$ is defined by
\begin{eqnarray}
\Karea(M) := \sup_{(E,\nabla) \in K^{\times}(M)}
\frac{1}{\norm{R^{E,\nabla}}}
\end{eqnarray}
\end{dfn}
$\Karea(M) = \infty$ if and only if for any $\ep >0$, 
there exists a vector bundle $(E,\nabla) \in K^{\times}(M)$
with a small curvature $\norm{R} <\ep $.

The following fundamental lemma is useful. 
\begin{lem}\label{lem:hyper spherical}
Let $M$ and $M'$ be Riemannian manifolds and let $\map{f}{M}{M'}$
be a smooth Lipschitz map of non-zero degree which is proper or
constant outside a compact subset in $M$.
Then $\Karea(M) \geq c^{-2}\Karea(M')$ 
where $c$ is the Lipschitz constant of $f$.
\end{lem}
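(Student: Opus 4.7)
The natural strategy is pullback. Given any $(E',\nabla') \in K^{\times}(M')$, I would form the pulled-back bundle $(f^{*}E', f^{*}\nabla')$ on $M$ and show it lies in $K^{\times}(M)$ with curvature norm controlled by $c^{2}\norm{R^{E',\nabla'}}$. Then
\[
\Karea(M) \geq \frac{1}{\norm{R^{f^{*}E',f^{*}\nabla'}}} \geq \frac{1}{c^{2}\norm{R^{E',\nabla'}}},
\]
and taking the supremum over $(E',\nabla') \in K^{\times}(M')$ gives the claim.

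The plan has three verifications. First, the triviality condition (i): if $f$ is constant outside a compact $K\subset M$, then $f^{*}E'$ is tautologically trivial (with its flat pullback connection) on $M\setminus K$. If instead $f$ is proper and $E'$ is flat and trivial outside a compact $K'\subset M'$, then $f^{-1}(K')$ is compact in $M$ and $f^{*}E'$ is flat trivial on its complement, since trivializations and flat connections pull back. Second, the non-triviality condition (ii): Chern classes are natural, so $c_{k}(f^{*}E') = f^{*}c_{k}(E')$ in compactly supported cohomology (using either that $f$ is proper or that it is constant at infinity, so the pullback on $H^{*}_{c}$ is well defined). The degree formula then gives, for the polynomial $p$ realizing a non-zero Chern number of $E'$,
\[
\int_{M} p\bigl(c_{1}(f^{*}E'),c_{2}(f^{*}E'),\dots\bigr) = \deg(f) \int_{M'} p\bigl(c_{1}(E'),c_{2}(E'),\dots\bigr) \neq 0.
\]

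Third, the curvature estimate. The pulled-back curvature satisfies $R^{f^{*}E', f^{*}\nabla'}_{x}(X,Y) = R^{E',\nabla'}_{f(x)}(df_{x}X, df_{x}Y)$ as an endomorphism of the fiber. Since $f$ is $c$-Lipschitz, $\norm{df_{x}X}\leq c\norm{X}$, so pulling back a $2$-form costs at most $c^{2}$; this is the source of the exponent $2$. Taking operator norms on the endomorphism factor and supremum over unit $\xi \in \bigwedge^{2}TM$ yields $\norm{R^{f^{*}E',f^{*}\nabla'}} \leq c^{2}\norm{R^{E',\nabla'}}$.

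The only step that requires a bit of care is the second: making sense of $f^{*}$ on compactly supported cohomology and justifying the degree formula in the non-compact setting. Both the properness case and the eventually-constant case are exactly the two standard hypotheses under which $f^{*}\colon H^{*}_{c}(M')\to H^{*}_{c}(M)$ and $\int_{M} f^{*}\omega = \deg(f)\int_{M'}\omega$ are valid, so this is routine once the hypothesis is used explicitly. Everything else is a direct computation.
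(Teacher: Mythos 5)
Your proposal is correct and follows essentially the same route as the paper: pull back the bundle and connection along $f$, check membership in $K^{\times}(M)$ via the properness/constancy-at-infinity hypothesis and the degree formula for Chern numbers, and bound the pulled-back curvature by $c^{2}\norm{R^{E',\nabla'}}$ using the Lipschitz constant. The only cosmetic difference is that the paper phrases the conclusion via $\Karea(M')=1/a$ and an $\ep$-approximation rather than a supremum over all bundles.
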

Lemma\ref{lem:hyper spherical} implies
that finiteness or infiniteness of $\Karea (M)$
is independent of the deformation of Riemannian metrics
on compact subsets in $M$.
In particular, the finiteness or infiniteness of $\Karea$
is a homotopy invariant of compact manifolds. This is stated in
\cite{Gr96} without proof. We give a proof for convenience.

\begin{proof}
Set $\Karea (M') = \frac{1}{a}$. If $\Karea (M') = \infty$, 
take $a=0$. For any $\ep >0$, there exists
$E=(E,\nabla) \in K^{\times}(M') $ with $\norm{R^{E}}<a+\ep $.
Let $p$ be a polynomial satisfying 
$\int_{M'} p(c_{1}(E),c_{2}(E), \cdots ) \neq 0 $.
Consider the vector bundle
$f^{\ast}E \rightarrow M$ equipped with the induced connection
$f^{\ast}\nabla$.
Since $f$ is proper or constant outside a compact subset,
$f^{\ast}E$ is isomorphic to a flat bundle
$\C ^{r}$ outside a compact subset.
Moreover, 
\begin{eqnarray}
\int_{M} p(c_{1}(f^{\ast}E),c_{2}(f^{\ast}E), \cdots ) = \deg (f)
\int_{M'}
p(c_{1}(E),c_{2}(E), \cdots ) \neq 0
\end{eqnarray}
Hence, $(f^{\ast}E, f^{\ast}\nabla) \in K^{\times}(M)$. On the other hand
\begin{eqnarray}
R^{f^{\ast}E}(u \wedge v) = R^{E}(f_{\ast}(u \wedge v))
\\
\norm{ R^{f^{\ast}E} } \leq \norm{f_{\ast}(u \wedge v)}
\norm{R^{E}}
\leq c^{2} (a+\ep)
\\
\Karea (M) \geq \frac{1}{\norm{ R^{f^{\ast}E} }} \geq
\frac{1}{c^{2}(a+\varepsilon)}
\end{eqnarray}
\\Therefore $\Karea (M) \geq c^{-2} \Karea(M')$
\end{proof}

Here, we give some examples of K-area.

\begin{exas}\label{exa:K-area}
\mbox{}\\
(1) Let $S^{2m}$ denote even dimensional spheres $\Karea(S^{2m})<\infty$,
which follows from theorem \ref{thm:Sc and K-area}.
\\
(2) If $M$ be an oriented even dimensional closed simply connected
manifold, then $\Karea(M)<\infty$.
Later in lemma \ref{lem:simply connected}, every vector bundle
$(E,\nabla)$ over a closed simply connected manifold with sufficiently
small curvature $\norm{R^{E,\nabla}}<\delta$ is topologically trivial,
which implies that all Chern numbers of $E$ are zero.
Hence $\Karea(M)<\frac{1}{\delta}$.
$\Karea (S^{n})<\infty$ can be verified also from this.
\\
(3) $\Karea(T^{2m})=\infty$ where $T^{2m}$ denote even dimensional 
tori. It follows from theorem \ref{thm:Sc and K-area} that $T^{2m}$
and hence $T^{2m-1}$ do not admit Riemannian metrics of positive
scalar curvature.
\end{exas}
\begin{proof}[proof of (3)]
Generally let $M=(M,g)$ be a Riemannian manifold equipped with a metric
$g$. Observe that $\Karea(M,c^{2}g)=c^{2}\Karea(M,g)$ by
the preceding lemma \ref{lem:hyper spherical}.

On the other hand let $\map{\pi}{\tilde{M}}{M}$ be a finite
covering space of $M$ which is trivial outside a compact subset.
Then $\Karea(\tilde{M})=\Karea(M)$.
In fact for $E=(E,\nabla) \in K^{\times}(\tilde{M})$, we can take
$\pi_{!}E \in K^{\times}$ whose fiber is 
\begin{eqnarray}
\pi_{!}E_{x}=\bigoplus_{\tilde{x}\in \pi^{-1}(x)} E_{\tilde{x}}
\end{eqnarray}
So we can verify that $\norm{R^{E}}\geq \norm{R^{\pi_{!}E}}$
and hence $\Karea(M)\geq \Karea(\tilde{M})$.
Conversely, $\map{\pi}{\tilde{M}}{M}$ satisfies the hypothesis of
the preceding lemma \ref{lem:hyper spherical} with Lipschitz constant
$c=1$ so $\Karea(\tilde{M})\geq \Karea(M)$.
Therefore $\Karea(\tilde{M})=\Karea(M)$.

Now consider an $2m$-dimensional tori equipped with flat metrics
$g_{0}$ which are induced by $T^{2m}=\R^{2m} \slash \Z^{2m}$.
There exist $2^{2m}$-fold coverings
$\map{\pi}{(T^{2m},4g_{0})}{(T^{2m},g_{0})}$.
Hence $\Karea(T^{2m},g_{0}) = \Karea(T^{2m},4g_{0}) =
4\Karea(T^{2m},g_{0})$, which implies $\Karea(T^{2m},g_{0}) = \infty$.
\end{proof}

\section{Surgery}
Let $M_{1}$ and $M_{2}$ be Riemannian manifolds and let 
$M_{1} \sharp M_{2}$ denote the connected sum of $M_{1}$ and $M_{2}$
equipped with a Riemannian metric which coincides with the original
metric of $M_{1} \sqcup M_{2}$ outside a compact neighborhood of
the connecting region.

\begin{exa}
Let $M$ be a 2m dimensional closed spin manifold.
Then $T^{2m} \sharp M$ does not admit a Riemannian metric
of positive scalar curvature.
In fact $\Karea (T^{2m})=\infty$ implies $\Karea (T^{2m}\sharp M) =
\infty$ and apply theorem \ref{thm:Sc and K-area}.
\end{exa}

\begin{proof}[proof of lemma \ref{lem:conncetd sum trivial}]
Suppose that $\Karea(M_{1}) = \infty$. Write $M_{1} \sharp M_{2}$ as
$(M_{1} \setminus D^{n}) \cup (M_{2} \setminus D^{n})$.
There exits a smooth map $\map{f}{(M_{1} \sharp M_{2})}{M_{1}}$
which satisfies the followings;
\\ \mbox{} \quad $f(M_{2} \setminus D^{n}) = \{x\}$
where $x$ is the center of $D^{n} \subset M_{1}$.
\\ \mbox{} \quad $f=\id$ outside a neighborhood of
$D^{n} \subset M_{1}$.
\\ \mbox{} \quad $\deg f =1$.
\\
Although $f$ does not necessarily satisfy the assumption of
lemma \ref{lem:hyper spherical},
we can see that $(f^{\ast}E, f^{\ast}\nabla) 
\in K^{\times}(M_{1} \sharp M_{2})$ if $(f,\nabla)\in K^{\times}(M_{1})$
just like as in the proof of lemma \ref{lem:hyper spherical}.
Hence it follows that
$\Karea(M_{1} \sharp M_{2}) \geq c^{-2}\Karea(M_{1})=\infty$
where $c$ is the Lipschitz constant of $f$.
\end{proof}

However, the converse of lemma \ref{lem:conncetd sum trivial} is not
trivial.
The following two lemmata are used to verify theorem \ref{thm:surgery}.

\begin{lem}\label{lem:simply connected}
Let $N$ be a compact simply connected Riemannian manifold and take
$E=(E, \nabla) \in K^{\times}(N)$.
For any $\ep>0$, there exist $\delta>0$ such that if
$\norm{R^{E}} < \delta$,
there exists a global orthonormal frame
$\bar{e}=\{\bar{e^{i}}\}_{i=1}^{r}$ for $E$ satisfying
$\norm{\omega}<\ep$ where $\omega$ is the connection 1-form of
$(E,\nabla)$ with respect to $\bar{e}$. 
\end{lem}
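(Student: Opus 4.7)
The plan is to build the desired frame in two phases: first, construct local frames with small connection forms on each ball of a finite cover of $N$ via radial parallel transport; second, use the simple connectivity of $N$ to patch these local frames into a single global frame.

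For the local phase, by compactness of $N$, fix a finite cover $\{B_{i}\}$ of $N$ by geodesically convex normal-coordinate balls $B_{i} = B(x_{i}, \rho)$ of a uniform small radius $\rho > 0$. On each $B_{i}$, choose an orthonormal frame at $x_{i}$ and radially parallel-transport it to obtain a local orthonormal frame $e_{i}$ on $B_{i}$. A standard computation in radial gauge (integrating the parallel transport ODE against the curvature) gives the estimate $\norm{\omega_{i}} \leq C_{1} \rho \norm{R^{E}}$, where $\omega_{i}$ is the connection 1-form of $\nabla$ in $e_{i}$ and $C_{1}$ depends only on the geometry of $N$. Hence by choosing $\delta$ small, every $\omega_{i}$ can be forced to have norm at most any prescribed $\ep' > 0$.

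For the global phase, the transition functions $g_{ij}: B_{i} \cap B_{j} \to U(r)$ relating $e_{i}$ and $e_{j}$ satisfy $g_{ij}^{-1} dg_{ij} = \omega_{j} - g_{ij}^{-1} \omega_{i} g_{ij}$, and therefore $\norm{dg_{ij}} \leq 2\ep'$. So each $g_{ij}$ is uniformly close to a constant $g_{ij}^{0} \in U(r)$. Combining this near-constancy with the exact cocycle identity $g_{ij} g_{jk} g_{ki} = 1$ on triple overlaps forces the constants $g^{0}_{ij}$ to satisfy the cocycle condition up to small error. Assuming (after refinement if necessary) that $\{B_{i}\}$ is a good cover, the nerve has the same fundamental group as $N$, which is trivial. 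One then chooses a spanning tree of the nerve, defines constants $h_{i} \in U(r)$ by propagating from a base vertex along tree edges via the $g^{0}_{ij}$, and uses simple connectivity to check that the $h_{i}$ are globally consistent in the sense $g^{0}_{ij} \approx h_{i}^{-1} h_{j}$: each non-tree edge closes to a loop in the nerve that bounds a 2-disk, and walking around this disk yields a product of nearly-identity elements, hence is itself nearly the identity.

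The global frame $\bar{e}$ is then obtained by modifying each $e_{i}$ to $e_{i} h_{i}^{-1}$ (a constant unitary gauge transformation preserves $\norm{\omega_{i}}$), patching these via a partition of unity subordinate to $\{B_{i}\}$, and orthonormalizing by Gram--Schmidt. By choosing $\delta$ sufficiently small, every accumulated error can be made less than $\ep$. The main obstacle will be the quantitative gluing step: the cycle/holonomy argument must control how errors compound over the combinatorics of the nerve. A clean route is to work with a triangulation of $N$ of mesh size comparable to $\rho$, so that any loop in the 1-skeleton bounds a definite (geometrically controlled) number of 2-simplices, each contributing a holonomy error bounded by $\norm{R^{E}}$ times the area of the filling simplex; summing gives the required global control.
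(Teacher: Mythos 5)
Your proposal is correct in outline and its local phase (radial parallel transport on geodesically convex balls, giving $\norm{\omega_i}\leq C_1\rho\norm{R^E}$) is exactly what the paper does. Where you genuinely diverge is in how simple connectivity enters. The paper anchors \emph{all} local frames to a single base point $p_{\alpha_0}$: each frame at $p_\alpha$ is obtained by parallel transport along a chosen minimal geodesic from $p_{\alpha_0}$, so the transition function $\psi_{\beta\alpha}(x)$ is literally the holonomy around a concrete loop in $N$; since $\pi_1(N)=1$ that loop bounds a disk of area $\leq c_2$, and integrating the curvature over the disk gives $\norm{\psi_{\beta\alpha}-\id}\leq c_2\delta$ directly. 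The transition functions are thus already near the identity and no adjustment by constants is needed; the paper then writes $\psi_{\beta\alpha}=\exp(v_{\beta\alpha})$ and glues two charts at a time with $\exp(\rho_\beta v_{\beta\alpha})$. You instead leave the local frames unanchored, obtain only that the $g_{ij}$ are near locally constant, and then invoke simple connectivity \emph{combinatorially}: the nerve of the good cover is simply connected, an approximate $U(r)$-valued \v{C}ech $1$-cocycle of constants is trivialized along a spanning tree, and the discrepancy on non-tree edges is controlled by filling loops in the nerve with a bounded number of $2$-simplices. This is the discrete shadow of the paper's filling-disk argument (your triple-overlap errors play the role of the curvature integral over the filling disk), and it works: all the constants you need (diameters of overlaps, number of simplices in a filling, $\max|d\rho_i|$) depend only on $N$ and the fixed cover, not on $E$, which is the one point that must be checked for the lemma to be usable uniformly over $K^{\times}(N)$. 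The paper's route buys a shorter, more geometric argument with no nerve combinatorics and no Gram--Schmidt (its gluing via $\exp$ of skew-Hermitian logs stays inside orthonormal frames by construction); your route is more modular and would generalize to situations where one only knows the transition functions are near constants, but you should spell out the quantitative filling bound and the Gram--Schmidt error estimate, which you correctly flag as the main remaining work.
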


\begin{proof}
Fix a finite good open covering $\{ V\ua \}$ of $N$ equipped with geodesic
coordinates whose centers are $p\ua$. So
each finite intersection of $\{ V\ua \}$ is contractible unless it is empty.
Let $E$ be a Hermitian vector bundle with $\norm{R^{E}}<\delta$
for some $\delta >0$.
Fix an orthonormal basis $e_{\alpha_{0}}=\{e_{\alpha_{0}}^{i}\}_{i=1}^{r}$
for $E|_{p_{\alpha_{0}}}$. 
Let $e\ua=\{e\ua^{i}\}_{i=1}^{r}$ be an orthonormal basis for $E|_{p\ua}$
obtained by the parallel transportation of $e_{\alpha_{0}}$
along $\gamma _{0}^{\alpha} $, one of
the minimal geodesics connecting $p_{\alpha_{0}}$ and $p\ua$. 
Extend $e\ua$ on each $V\ua$ by the parallel transportation along the
geodesic
$t \mapsto \exp_{p\ua}(tv)$ where $v$ is an unit tangent vector at $p\ua$.

Let $\omega \ua$ be the connection 1-form with respect to $e\ua$ on
${V\ua}$. 

For $x \in V\ua$ let $\gamma \ua ^{x}$ be the (unique) geodesic
connecting $p\ua$ and $x$ and for a piece-wise smooth curve $\gamma$
let $T_\gamma$ be the  parallel transportation along $\gamma$.
Take $x \in U\ua$ and $X \in T_{x}M$. By the definition of $e\ua$,
$e\ua(\exp_{x}(tX))
=T_{\gamma \ua^{\exp_{x}(tX)}}T_{\gamma \ua^{x}}^{-1} e\ua (x)$.
Then,
\begin{eqnarray}
\nabla _{X}e\ua (x) &=& \lim_{t \rightarrow 0}\frac{1}{t}
\left( T_{\exp_{x}(tX)}^{-1} e\ua(\exp_{x}(tX))
- e\ua(x) \right)
\nonumber \\
&=& \lim_{t \rightarrow 0}\frac{1}{t}
\left( T_{\exp_{x}(tX)}^{-1}
T_{\gamma \ua^{\exp_{x}(tX)}}T_{\gamma \ua^{x}}^{-1}
-\id \right)e\ua(x)
\\
\norm{\nabla _{X}e\ua (x)} & \leq & \lim_{t \rightarrow 0}
\frac{1}{t} \int_{D_{t}} \norm{R}
\end{eqnarray}
where $D_{t}$ is a 2-dimensional disk whose boundary is
the closed curve
$\exp_{x}(tX)^{-1} \gamma \ua ^{\exp_{x}(tX)} (\gamma \ua^{x})^{-1}$.
Since $\mrm{area}(D_{t}) = O(t) \quad (t\rightarrow 0)$, we have 
\begin{eqnarray}
\norm{\nabla e\ua} \leq c_{1}\delta \label{eq:norm(nabla(e))}
\quad \mrm{i.e.} \quad
\norm{ \omega \ua } \leq c_{1}\delta
\end{eqnarray}
where $c_{1}$ is a constant depending on $\{ V_ua \}$. Keep in mind that
constants $c_{1}, c_{2}, \cdots, c_{6}$ which will appear below
are independent of the vector bundle $E$.

Let $\map{\psi \uba}{V\ua \cap V\ub}{U(r)}$ denote the
transition functions, i.e., $\psi \uba e\ua = e\ub$.
By the definition of $e\ua$, $\psi \uba (x)= T_{\gamma}$ 
where $\gamma = \gamma \ub ^{x} \gamma _{0}^{\beta} \left( \gamma \ua ^{x}
\gamma _{0}^{\alpha} \right)^{-1} $.
Since $N$ is simply connected, There exists a 2-dimensional
disk $D \subset N$ whose boundary is $\gamma$.
By the compactness, we can take $D$ so that $\mathrm{area}(D) < c_{2}$
where $c_{2}$ is a constant depending on $N$ and $\{V\ua \}$.
Then we have
\begin{eqnarray}\label{eq:norm(psi-id)}
\norm{\psi \uba - \id} \leq \int_{D} \norm{R}<c_{2}\delta
\end{eqnarray}

By $ \psi \uba e\ua = e\ub $, we have $\dif \psi \uba \otimes
e\ua + \psi \uba \nabla e\ua = \nabla e\ub$. Hence by
(\ref{eq:norm(nabla(e))})
\begin{eqnarray}
\norm{ \dif \psi \uba } < 2c_{1}\delta \label{eq:norm(dif(psi))}
\end{eqnarray}
Taking into account the estimate (\ref{eq:norm(psi-id)})
we can set
$\psi \uba = \exp (v \uba)$
for some $\map{v\uba}{V\ua \cap V\ub}{\mf{u}(r)}$
using $\map{\exp}{\mf{u}(r)}{U(r)}$
if $\delta >0$ is sufficiently small
since $\exp$ is a diffeomorphism from a neighbourhood of
$0 \in \mathfrak{u}(r)$
to a neighbourhood of ${\rm id} \in U(r)$.
Remark that (\ref{eq:norm(psi-id)}) and
(\ref{eq:norm(dif(psi))}) implies
\begin{eqnarray}\label{eq:norm(v)}
\norm{v\uba} < c'_{2}\delta \quad \mathrm{and} \quad \norm{\dif v\uba}
< 2c_{1}\delta
\end{eqnarray}

There exist open subsets $W\ua$ and compact subsets $K\ua$
such that $W\ua \subset K\ua \subset V\ua$ and $\bigcup \ua
W\ua = N$. Note that these are independent of $(E,\nabla)$.
Let $\{ \rho \ua, \rho \ub, 1- \rho \ua - \rho \ub \}$ be a partition of
unity associated to $\{ V\ua, V\ub, N\setminus(K\ua \cup K\ub) \}$.
($ \rho \ua + \rho \ub \equiv 1 $ on $K\ua \cup K\ub$.)
Construct an orthonormal frame $e_{(2)}$ on $K\ua \cup K\ub$ as follows;

\begin{eqnarray}\label{eq:e_(2)=}
e_{(2)}=
\left\{
\begin{array}{ll}
(\exp (\rho \ub v\uba))e\ua, &\quad \mathrm{on} K\ua
\\
(\exp (\rho \ua v_{\alpha \beta}))e\ub, &\quad \mathrm{on} K\ub

\end{array}
\right.
\end{eqnarray}
$e_{(2)}$ is well defined. In fact on $K\ua \cap K\ub$,
\begin{eqnarray}
\exp (\rho\ua v_{\alpha \beta})e\ub
&=& \exp(\rho\ua(-v\uba))\exp(v\uba)e\ua
\nonumber \\
&=& \exp((1-\rho\ub)(-v\uba)+v\uba)e\ua = \exp(\rho\ub v\uba)e\ua
\end{eqnarray}
There is a constant $c_{3}>0$ such that $|\dif \rho \ua|<c_{3},
|\dif \rho \ub|<c_{3}$.
Hence by (\ref{eq:norm(nabla(e))}), (\ref{eq:norm(v)}), and
(\ref{eq:e_(2)=}),
\begin{eqnarray}
\norm{\nabla e_{(2)}} &=& \norm{\dif (\exp(\rho\ub v\uba))\otimes e\ua +
\exp(\rho\ub v\uba)\nabla e\ua}
\nonumber \\
&\leq& |\dif \rho \ub| \norm{v\uba}+ \rho \ub \norm{\dif v\uba}
+\norm{\nabla e\ua} \qquad < c_{4} \delta
\end{eqnarray}
This means the connection 1-form $\omega _{(2)}$ associated to $e_{(2)}$
satisfies $\norm{\omega _{(2)}} < c_{4} \delta$.

Next, choose another open subset $V_{\gamma}$, set $V_{(2)}:= V\ua \cup
V\ub$, $K_{(2)}:= K\ua \cup K\ub$ 
and let $\psi_{\gamma(2)}\colon K_{\gamma} \cap K_{(2)} \rightarrow U(r)$ 
denote the transition function
i.e., $e_{\gamma} = \psi_{\gamma(2)} e_{(2)}$. 

Remark that $\id = \psi_{\gamma (2)} \exp(\rho \ub v\uba)
\psi _{\alpha \gamma}$ implies $\norm{\psi_{\gamma (2)}-\id} <
c_{5}\delta$ 
and $\norm {\dif \psi_{\gamma (2)}} < c_{5}\delta$. 
Therefore, we can write $ \psi_{\gamma (2)} 
= \exp (v_{\gamma (2)}) $ for some $v_{\gamma (2)}$ 
satisfying $ \norm{v_{\gamma (2)}} < c'_{5}\delta $ and $\norm{\dif
v_{\gamma (2)}} < c'_{5}\delta $.

We can employ the similar argument to construct an orthonormal frame
$e_{(3)}$ on $K_{\gamma}\cup K_{(2)}$ 
satisfying $\norm{\nabla e_{(3)}} < c_{6}\delta$.
Namely, let $\{ \rho_{\gamma}, \rho_{(2)}, 1-\rho_{\gamma}- \rho_{(2)} \}$
be a partition of
unity associated to $\{ V_{\gamma}, V_{(2)}, N\setminus(K_{\gamma} \cup
K_{(2)}) \}$, and define 
\begin{eqnarray}
e_{(3)}=
\left\{
\begin{array}{ll}
(\exp (\rho_{(2)} v_{(2)\gamma}))e_{\gamma}, &\quad \mathrm{on} K_{\gamma}
\\
(\exp (\rho_{\gamma} v_{\gamma (2)}))e_{(2)}, &\quad \mathrm{on} K_{(2)} 
\end{array}
\right.
\end{eqnarray}
It satisfies $\norm{\nabla e_{(3)}} < c_{6}\delta$.

Repeat the above argument to construct a global orthonormal frame
$\bar{e}$ for $E$ which satisfies $\norm{\nabla \bar{e}} < c\delta$.
It means $\norm{\omega}<c\delta$ where $\omega$ is the connection
1-form with respect to $\bar{e}$.
Though $c$ depends on $N$, it does not depend on $(E,\nabla)$.
\end{proof}

\begin{rem}
The proof of lemma \ref{lem:simply connected} also holds if 
$N$ is not connected but each connected component is
simply connected by applying the arguments on each connected component.
\end{rem}
\begin{lem}\label{lem:simply connected boundary}
Let $M$ be a Riemannian manifold with a simply connected boundary
$N=\pd M$, and let $E_{0}=(E_{0},\nabla_{0})$ be a Hermitian vector
bundle over $M$ equipped with a compatible connection.
Suppose that a neighborhood of $\pd M$ is equipped with a product metric
of $(-2,2] \times N$ and that the connection $\nabla_{0}$ restricted to
$(-2,2] \times N$ is invariant under the translation.

Let $M_{(-2,a]}$ denote $(M\setminus (-2,2] \times N) \cup ((-2,a] \times
N)$.
For instance the original $M$ can be denoted by $M_{(-2,2]}$.
Then for any $\ep>0$, there exists $\delta>0$ such that
if $\norm{R^{E_{0}}}<\delta$,
there exists a vector bundle $(E,\nabla)$ over $M_{(-2,6]}$ 
satisfying the following;
\\ (i) $\norm{R^{E}}<\ep$.
\\ (ii) The restriction of $(E,\nabla)$ to $M_{(-2,2]}$ is
isomorphic to $(E_{0},\nabla_{0})$.
\\ (iii) $(E,\nabla)$ is trivial and flat on $(4,6] \times N$.
\end{lem}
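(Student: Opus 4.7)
The plan is to apply Lemma \ref{lem:simply connected} to the restriction $E_{0}|_{N}$ and then use the resulting almost-parallel frame to interpolate $\nabla_{0}$ to the trivial flat connection along the cylindrical extension. Since the collar neighborhood of $\pd M$ in $M$ is a Riemannian product and $\nabla_{0}$ is translation invariant there, the curvature of $E_{0}|_{N}$ (a connection on $N$) also has norm less than $\delta$. Because $N$ is compact and simply connected, Lemma \ref{lem:simply connected} applied to $E_{0}|_{N}$ produces, for any prescribed $\ep'>0$ and for $\delta$ small enough, a global unitary frame $\bar{e}$ for $E_{0}|_{N}$ whose connection 1-form $\omega$ satisfies $\norm{\omega}<\ep'$.

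Next I would pull $\bar{e}$ back through the projection $(-2,2]\times N \to N$ to obtain a unitary trivialization of $E_{0}$ on the collar, in which $\nabla_{0}=\dif + \pi^{\ast}\omega$ (using translation invariance of $\nabla_{0}$). I would then construct $E$ on $M_{(-2,6]}$ by letting $E$ be $E_{0}$ on $M_{(-2,2]}$, letting $E$ be the trivial bundle $\C^{r}$ on $[2,6]\times N$, and gluing along $\{2\}\times N$ using the frame $\bar{e}$. Choose a smooth cutoff $\phi\colon[2,6]\to[0,1]$ with $\phi\equiv1$ near $t=2$ and $\phi\equiv0$ on $[4,6]$, and define the connection on the cylindrical part by $\nabla = \dif + \phi(t)\,\pi^{\ast}\omega$. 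On $(4,6]\times N$ this is the trivial flat connection, giving (iii); on $M_{(-2,2]}$ the connection agrees with $\nabla_{0}$ by construction, giving (ii).

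It remains to verify (i). The curvature of $\nabla$ on the cylinder is
\begin{equation}
R^{E} = \phi'(t)\,\dif t\wedge\pi^{\ast}\omega + \phi(t)\,\pi^{\ast}\dif_{N}\omega + \phi(t)^{2}\,\pi^{\ast}(\omega\wedge\omega).
\end{equation}
Since $\dif_{N}\omega+\omega\wedge\omega = R^{E_{0}}|_{N}$, one has $\norm{\dif_{N}\omega}\le\delta+\norm{\omega}^{2}$, and all three terms are bounded by a universal multiple of $\max(\delta,\ep')+\ep'^{2}$, where the constant depends only on the fixed cutoff $\phi$ and on $N$, not on $E_{0}$. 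Choosing $\ep'$ and then $\delta$ small enough makes $\norm{R^{E}}<\ep$ both on the cylinder and (trivially) on $M_{(-2,2]}$, where $\norm{R^{E_{0}}}<\delta<\ep$.

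The main obstacle is purely bookkeeping: ensuring that the constants produced by Lemma \ref{lem:simply connected} (applied to $N$) together with the bounds on $\dif\phi$ assemble into a uniform control of $\norm{R^{E}}$ independent of $E_{0}$. In particular one must observe that the translation invariance of $\nabla_{0}$ on the collar is what lets one pull back a single frame on $N$ to a frame on $(-2,2]\times N$ in which $\nabla_{0}=\dif+\pi^{\ast}\omega$ with $\omega$ carrying no $\dif t$ component, so that the interpolation $\phi(t)\pi^{\ast}\omega$ is a genuine connection 1-form and the curvature estimate above is valid.
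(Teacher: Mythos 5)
Your proposal is correct and follows essentially the same route as the paper: apply Lemma \ref{lem:simply connected} to a slice of the collar (using translation invariance of $\nabla_{0}$) to get a global frame with small connection 1-form $\omega$, then damp $\omega$ to zero with a cutoff $\phi(t)$ on the extended cylinder and bound the curvature by terms of size $O(\delta)+O(\ep')+O(\ep'^{2})$. Your explicit decomposition of $R^{E}$ into the $\phi'\,\dif t\wedge\omega$, $\phi\,\dif_{N}\omega$, and $\phi^{2}\,\omega\wedge\omega$ terms matches the paper's estimate.
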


\begin{proof}
Choose $\ep_{0}>0$ sufficiently small. For $\{0\} \times N$ and $\ep_{0}$,
we can find $\delta = \delta(\ep_{0}) >0$ as in the preceding lemma
\ref{lem:simply connected}.
Suppose that $\norm{R^{E_{0}}}<\delta$. Then 
we obtain a global orthonormal frame ${e}$ for $E_{0}|_{\{0\} \times N}$
such that 
the connection 1-form $\omega_{0}$ with respect to $e$ satisfies
$\norm{\omega_{0}} < \ep_{0}$.
Let $E$ be a trivial Hermitian vector bundle without a connection on
$M_{(-2,6]}$ which is an extension of $E_{0}$.
Extending $e$ we obtain a orthonormal frame for $E$ denoted also by $e$.
Now compose a connection 1-form $\omega$
with respect to $e$ on on $(-2,6]\times N$ by
\begin{eqnarray}
\omega|_{(t,y)} = \chi(t)\omega_{0}|_{y}
\end{eqnarray}
where $\chi$ is a smooth function on $(-2,6]$ satisfying
\begin{eqnarray}
&&\chi(t)\left\{
\begin{array}{ll}
 \equiv 1, &\quad t<2 \\
 \equiv 0, &\quad t>4
\end{array}
\right.\\
&&0 \leq \frac{\dif \chi}{\dif t} \leq 1
\end{eqnarray}
Since $\omega|_{(t,y)} = \omega_{0}|_{y}$ on $(-2,2) \times N$,
the new connection denoted by $\nabla$ can be patched with $\nabla_{0}$.
\begin{eqnarray}
\norm{R^{E,\nabla}} &=& \norm{\omega \wedge \omega + \dif \omega}
\nonumber \\
&=& \norm{\chi(t)^{2} \omega_{0} \wedge \omega_{0} 
+ \dif \chi(t) \wedge \omega_{0} + \chi(t) \wedge \dif \omega_{0}}
\nonumber \\
&\leq & |\chi(t)|\norm{ \omega_{0} \wedge \omega_{0}
+ \dif \omega_{0}}
+ |\chi(t)^{2}-\chi(t)|\norm{ \omega_{0} \wedge \omega_{0} }
+ \norm{\dif \chi \wedge \omega_{0}}
\nonumber \\
&\leq & \norm{R^{E_{0},\nabla_{0}}} + \norm{\omega_{0}}^{2} 
+ \norm{\omega_{0}}
\nonumber \\ &\leq & \delta + \ep_{0} ^{2} + \ep_{0}
\end{eqnarray}
Hence taking $\ep_{0}$ depending on $\ep$ and $\delta$ depending
on $\ep_{0}$ sufficiently small, we obtain $\norm{R^{E,\nabla}}<\ep$.
Moreover $\omega|_{(t,y)}=0$ for $t>4$ means that $(E,\nabla)$ is 
flat on $(4,6] \times N$.
\end{proof}

\begin{dfn}
Let $M$ be a Riemannian manifold, and $n=p+q= \dim (M)$. Fix an
inclusion $\varphi \colon S^{p} \times D^{q} \hookrightarrow M$.
Define another (smooth) manifold $M\oshp$ as follows;
\begin{eqnarray}
M \oshp := (M \setminus \varphi(S^{p} \times D^{q}))
\cup_{\pd (\varphi(S^{p} \times D^{q}))} (D^{p+1} \times S^{q-1})
\end{eqnarray}
Remark that $\pd(S^{p} \times D^{q}) \cong S^{p} \times S^{q-1}
\cong \pd(D^{p+1} \times S^{q-1})$.
$M\oshp$ is called a manifold obtained by $p$-surgery, or 
surgery in codimension $q$, along
$\varphi \colon S^{p} \times D^{q} \hookrightarrow M$.
\end{dfn}
We assume that $M\oshp$ is equipped with a Riemannian metric
which coincides with the original one
outside a compact neighborhood of $(D^{p+1} \times S^{q-1})
\subset M\oshp$.

\begin{proof}[proof of theorem \ref{thm:surgery}]
Since the finiteness of $\Karea$ is invariant under deformations of 
Riemannian metrics on compact subsets,
we may assume that the "connecting region", the neighborhood of 
$\pd (D^{p+1} \times S^{q-1}) \subset M\oshp$ 
is isometric to $ S^{p} \times (-4,4) \times S^{q-1} $ equipped with a
canonical Riemannan metric.

Let $E_{0} = (E_{0},\nabla_{0})$ be a Hermitian vector bundle
equipped with a compatible connection.
It is sufficient to verify that for sufficiently
small $\delta>0$, $\norm{R^{E_{0}}}<\delta$ 
implies that all Chern numbers of $E_{0}$ are zero .

Let $\map{f}{M\oshp}{M\oshp}$ be a smooth Lipschitz map such that
$ f=\id $ outside $ S^{p} \times (-4,4) \times S^{q-1} $,
$f(x,t,y) = (x,0,y)$ for $|t|<2$, and $\norm{f_{\ast}}<2$. 
Consider $f^{\ast}E_{0}$, the pull-back of $E_{0}$ by $f$ 
equipped with the induced connection $f^{\ast} \nabla_{0}$.
Then $\norm{R^{f^{\ast}E_{0}}} \leq 2\delta$
and the connection is invariant under the translation
near the cylindrical boundary.
Since $\deg(f)=1$ the Chern numbers of $f^{\ast}E_{0}$ are
equal to those of $E_{0}$.

Cut $M\oshp$ along $S^{p} \times \{0\} \times S^{q-1}$ and 
remove $D^{p+1} \times S^{q-1}$ component.
Let the resulting manifold be denoted by $M'$.

In the case of $p\neq 1$, we can apply to $f^{\ast}E_{0}|_{M'}$
the preceding lemma \ref{lem:simply connected boundary}
to obtain a vector bundle
$E=(E,\nabla)$ over $M'_{(-2,6]}$ with $\norm{R^{E}}<\ep$
which is trivial and flat on $S^{p} \times (4,6] \times S^{q-1}$.
Remark that $\pd M' = S^{p} \times \{0\} \times S^{q-1}$ is 
simply connected by the condition $q\neq 2$.

Even in the case of $p=1$, we claim that there exist a such extension
of the vector bundle.
In fact,
consider the two copies of removed region $D^{2} \times S^{n-2}$
and the vector bundle
$f^{\ast}E_{0} \rightarrow (D^{2} \times S^{n-2})$ and reverse
the orientation of one of them.
We can patch them together along the boundary
for the invariance of the connection of $f^{\ast}E_{0}$ under the
translation near the cylindrical boundary.
Since the resulting manifold, the double of $D^{2} \times S^{n-2}$,
is homeomorphic to $S^{2}\times S^{n-2}$, by lemma \ref{lem:simply
connected}
there exists a global orthonormal frame $e$ for the resulting vector
bundle over $S^{2}\times S^{n-2}$ such that the connection 1-form
$\omega_{0}$ with
respect to $e$ satisfies $\norm{\omega_{0}} < \ep_{0}$.
Hence there exists a such orthonormal frame
for the restriction of $f^{\ast}E_{0}$ onto a neighborhood of $\pd M'$.
Then we can construct a vector bundle
$E=(E,\nabla)$ over $M'_{(-2,6]}$ with $\norm{R^{E}}<\ep$
which is trivial and flat on $S^{p} \times (4,6] \times S^{q-1}$
in the same way as the proof of lemma \ref{lem:simply connected boundary}
.
\\

In the following argument the condition $q \neq 2$ is not needed.
Deform the metric of $S^{p}\times D^{q}$ to have a product metric
near the boundary $S^{p}\times (-1,1) \times S^{q-1}$
so that it can be patched with $M'_{(-2,6]}$. The resulting manifold
is homeomorphic to $M$. Since $(E,\nabla)$ is trivial and flat on 
$S^{p}\times (4,6] \times S^{q-1} \subset
(M'_{(-2,6]}\cup S^{p}\times D^{q})$, it can be extended on
$S^{p}\times D^{q}$ trivially.

Let $X$ be $M\oshp \setminus M'$ and let $Y$ be $M \setminus M'$.
They are homeomorphic to $D^{p+1} \times S^{q-1}$ and $S^{p}\times D^{q}$
respectively.
Glue $X$ and $(-Y)$ together to compose a Riemannian manifold
homeomorphic to $S^{n}$ where $(-Y)$ is the orientation reversed $Y$.
Remark that $(f^{\ast}E_{0},f^{\ast}\nabla_{0})$ on $X$ and 
$(E,\nabla)$ on $(-Y)$ can be joined smoothly.
Hence they define a Hermitian vector bundle equipped with a compatible
connection $(E,\nabla)$ with a small curvature
$\norm{R} < \ep$ on $X \cup (-Y)$.

Since $\Karea (M) = \Karea (M' \cup Y) < \infty$ and $\Karea (X \cup (-Y))
< \infty$, 
there exist $\ep >0$ such that for any polynomial $p$, 

\begin{eqnarray}
&&\int _{M' \cup Y}p(c_{1}(E),c_{2}(E), \cdots ) = 0 
\nonumber \\
&&\int _{X\cup (-Y)}p(c_{1}(E),c_{2}(E), \cdots ) = 0 
\end{eqnarray}
Therefore,
\begin{eqnarray}
&&\int _{M\oshp}p(c_{1}(E_{0}),c_{2}(E_{0}), \cdots )
\nonumber \\
&=& \int _{M' \cup X}p(c_{1}(f^{\ast}E_{0}),c_{2}(f^{\ast}E_{0}), \cdots )
\nonumber \\
&=& \int _{M' \cup Y}p(c_{1}(E),c_{2}(E), \cdots )
+ \int _{X \cup (-Y)}p(c_{1}(E),c_{2}(E), \cdots )
\nonumber \\
&=&0
\end{eqnarray}
which implies $\Karea (M\oshp) < \frac{1}{\delta} < \infty$.
\end{proof}

\begin{rem}
Notice that surgery is an invertible operator.
Let $M \oshp$ be a Riemannian manifold obtained from $M$ by $p$-surgery.
Then $M$ is obtained by performing $(q-1)$-surgery to $M\oshp$.
$M= (M \oshp \setminus D^{p+1} \times S^{q-1}) \cup (S^{p} \times
D^{q})$.
So if both $p\neq 1$ and $q-1 \neq 1$ are satisfied, then
$\Karea (M\oshp)=\infty$ iff $\Karea (M)=\infty$.
\end{rem}
\begin{proof}[proof of corollary \ref{cor:connected sum}]
By lemma \ref{lem:conncetd sum trivial},
$\Karea (M_{1} \sharp M_{2})<\infty$
implies $\Karea (M_{1})<\infty$ and $\Karea (M_{2})<\infty$.
Suppose that both $\Karea (M_{1})$ and $\Karea (M_{2})$ are finite.
Remark that the K-area of the disjoint union
$\Karea (M_{1} \sqcup M_{2})$ is equal to
$\max \{ \Karea (M_{1}),\Karea (M_{2}) \}$.
Then we can apply the case of $p=0$ of the preceding theorem
\ref{thm:surgery} to conclude $\Karea (M_{1} \sharp M_{2}) < \infty$.
\end{proof}

Notice that we did not assume that $M$ is compact in theorem
\ref{thm:surgery} and so we can "localize" K-area in the following sense.

\begin{exa}\label{exa:cylinder}
Let $M_{\infty}$ is an oriented even dimensional Riemannian manifold
with a cylindrical end $(0,\infty) \times S^{n-1}$ and suppose that 
$M_{0}:= M_{\infty} \setminus ((0,\infty) \times S^{n-1})$ is compact.
Let $M$ be a compact manifold obtained by sewing a disk $D^{n}$ on
$M_{0}$.
$\Karea(M_{\infty}) = \infty$ if and only if $\Karea(M) = \infty$.
\end{exa}
\begin{proof}
This is a direct consequence of corollary \ref{cor:connected sum}.
In fact $M_{\infty}$ can be written as $M \sharp M'$
where $M'$ is a complete Riemannian manifold homeomorphic to
$\R ^{n}$ whose metric is a smoothing of the
$n$ dimensional hemispherical metric attached along
the canonical cylindrical metric on $[0,\infty) \times S^{n-1}$
of the same radius.
Since $\inf \Sc_{M'} >0$ and $M'$ is spin, $\Karea (M')$ is finite.
Therefore
$\Karea(M_{\infty}) = \infty$ if and only if $\Karea(M) = \infty$.
\end{proof}

Example \ref{exa:cylinder} suggests that the cylindrical region
$(0,\infty) \times S^{n-1}$ have no effect on finiteness or infiniteness
of $\Karea$.



Address

Department of Mathematics, Faculty of Science, Kyoto University

Sakyo-ku, Kyoto 606-8502, JAPAN

fukumoto@math.kyoto-u.ac.jp
\end{document}